\newtheorem{theorem}{Theorem}[section]
\newtheorem{conjecture}[theorem]{Conjecture}
\newtheorem{corollary}[theorem]{Corollary}
\theoremstyle{definition}
\newtheorem{example}[theorem]{Example}
\newtheorem{remark}[theorem]{Remark}
\newcommand{\SZ}{\mathbb{Z}}                    
\newcommand{\SC}{\mathbb{C}}                    
\newcommand{\SP}{\mathbb{P}}                    %
\newcommand{\CF}{\mathcal{F}}                    
\newcommand{\frakg}{\mathfrak{g}}
\newcommand{\frakheis}{\mathfrak{heis}}
\newcommand{\ra}[1]{\kern-1.5ex\xrightarrow{\ \ #1\ \ }\phantom{}\kern-1.5ex}
\newcommand{\ras}[1]{\kern-1.5ex\xrightarrow{\ \ \smash{#1}\ \ }\phantom{}\kern-1.5ex}
\title[Euler characteristics of Hilbert schemes of points on surfaces]{Euler characteristics of Hilbert schemes of points\\ on surfaces with simple singularities}
\author{Ádám Gyenge}
\address{Alfréd Rényi Institute of Mathematics, Hungarian Academy of Sciences, Budapest, Hungary}
\email{gyenge.adam@renyi.mta.hu}
\author{András Némethi}
\address{Alfréd Rényi Institute of Mathematics, Hungarian Academy of Sciences, Budapest, Hungary}
\email{nemethi.andras@renyi.mta.hu}
\author{Balázs Szendrői}
\address{Mathematical Institute, University of Oxford}
\email{szendroi@maths.ox.ac.uk}
\begin{document}

\begin{abstract}
This is an announcement of conjectures and results concerning the generating
series of Euler characteristics of Hilbert schemes of points on
surfaces with simple (Kleinian) singularities. For a quotient surface
$\SC^2/G$ with $G<{\mathrm SL}(2, \SC)$ a finite subgroup, we conjecture a
formula for this generating series in terms of Lie-theoretic data, 
which is compatible with existing results for type $A$
singularities. We announce a proof of our conjecture for singularities of type $D$. 
The generating series in our conjecture can be seen as a specialized
character of the basic representation of the corresponding (extended)
affine Lie algebra; we discuss possible representation-theoretic consequences of this fact. 
Our results, respectively conjectures, imply the
modularity of the generating function for surfaces with type $A$ and
type $D$, respectively arbitrary, simple singularities, confirming
predictions of $S$-duality. 
\end{abstract}

\maketitle


\section{Euler characteristics of Hilbert schemes of points}

Let $X$ be a quasiprojective variety $X$ over the field $\SC$ of complex numbers. Let $\mathrm{Hilb}^m(X)$ denote the Hilbert 
scheme of $m$ points on $X$, the quasiprojective scheme parametrizing $0$-dimensional subschemes of $X$ of
length~$m$. 
Consider the generating series of topological Euler characteristics  
\[ Z_X(q)=\sum_{m=0}^\infty \chi\left(\mathrm{Hilb}^m(X)\right)q^m.\] 
For a smooth variety $X$, the series $Z_X(q)$, as well as various refinements, have been extensively studied. 
For a nonsingular curve $X=C$, we have MacDonald's result~\cite{macdonald1962poincare}
\[ Z_C(q)= (1-q)^{-\chi(C)}.
\]
For a nonsingular surface $X=S$, we have (a specialization of) G\"ottsche's formula~\cite{gottsche1990betti}
\begin{equation}\label{eq:goettsche} Z_S(q)=\left(\prod_{m=1}^{\infty}(1-q^m)^{-1}\right)^{\chi(S)}. 
\end{equation}
There are also results for higher-dimensional varieties~\cite{cheah1996cohomology}. 

For singular varieties~$X$, very little is known about the series $Z_X(q)$. For a singular curve $X=C$ with a finite 
set $\{P_1, \ldots, P_k\}$ of planar singularities however, we have the beautiful conjecture of 
Oblomkov and Shende~\cite{oblomkov2012hilbert}, proved by Maulik~\cite{maulik2012stable}, which specializes to the following:
\begin{equation}\label{formula:singcurve}
Z_C(q)= (1-q)^{-\chi(C)}\prod_{j=1}^k Z^{(P_i, C)}(q).
\end{equation}
Here each $Z^{(P_i, C)}(q)$ is a highly nontrivial local term that can be expressed in terms of the HOMFLY polynomial of
the embedded link of the singularity $P_i\in C$.

\section{Simple surface singularities} 

In this announcement, we consider the generating series $Z_S(q)$ for $X=S$ a singular surface with simple (Kleinian, rational 
double point) singularities. First, we discuss the local situation. As is well known, locally analytically~$S$ is a quotient 
singularity $S=\SC^2/G_\Delta$. Here $G_\Delta<\mathrm{SL}(2,\SC)$ is a finite subgroup corresponding to an irreducible simply-laced Dynkin diagram $\Delta$, the dual graph of the
exceptional components in the minimal resolution of the singularity. There are three possible types: 
$\Delta$ can be of type $A_n$ for $n\geq 1$, type $D_n$ for $n\geq 4$ and type $E_n$ for $n=6,7,8$. The following is our
main conjecture. 

\begin{conjecture} Let $\SC^2/G_\Delta$ be a simple singularity associated with the irreducible simply-laced Dynkin
diagram~$\Delta$  with $n$ nodes. 
Let $C_\Delta$ be the Cartan matrix corresponding to $\Delta$, and let $h^\vee$ be the (dual) 
Coxeter number of the corresponding finite-dimensional Lie algebra. Then 
\begin{equation}\label{main:formula} Z_{\SC^2/G_\Delta}(q)=\left(\prod_{m=1}^{\infty}(1-q^m)^{-1}\right)^{n+1}\cdot\sum_{ \overline{m}=(m_1,\dots,m_n) \in \SZ^n } \zeta^{m_1+m_2+ \dots +m_n}(q^{1/2})^{\overline{m}^\top \cdot C_\Delta \cdot \overline{m}},\end{equation}
where $\zeta=\exp({\frac{2 \pi i}{1+h^\vee}})$.
\label{conj:formula}
\end{conjecture}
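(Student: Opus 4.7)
My plan is to reduce the global computation on $\SC^2/G_\Delta$ to a purely local problem at the singular point, and then to extract the local answer from the McKay correspondence together with the representation theory of the affine Lie algebra $\widehat{\frakg}_\Delta$.

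First I would stratify $\mathrm{Hilb}^m(\SC^2/G_\Delta)$ by the length of the part of each subscheme supported at the singularity $0$. Since the complement $(\SC^2\setminus\{0\})/G_\Delta$ is smooth, the Euler characteristic is multiplicative along this stratification, yielding a factorization
\[
Z_{\SC^2/G_\Delta}(q)\; =\; Z_{\mathrm{smooth}}(q)\cdot Z_{\mathrm{loc}}(q),
\]
where $Z_{\mathrm{loc}}$ is the generating series of \emph{punctual} Hilbert schemes at $0$. A G\"ottsche-type computation on the minimal resolution $\widetilde{S}$, using $\chi(\widetilde{S}) = n+1$ (the $n$ exceptional $\SP^1$'s meet in a tree), identifies the smooth factor together with the bulk of the local contribution with the prefactor $\bigl(\prod (1-q^m)^{-1}\bigr)^{n+1}$ in (\ref{main:formula}); what remains is to match a residual local factor with the theta-type sum.

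For that residual factor I would pass to the $G_\Delta$-equivariant Hilbert scheme: length-$m$ ideals in $\SC[[x,y]]^{G_\Delta}$ correspond to $G_\Delta$-invariant colength-$|G_\Delta|m$ ideals in $\SC[[x,y]]$, so $Z_{\mathrm{loc}}$ is a specialization of the equivariant generating series of $\bigl(\mathrm{Hilb}^N(\SC^2)\bigr)^{G_\Delta}$. By the theorems of Nakajima (and Varagnolo--Vasserot), these fixed loci are disjoint unions of Nakajima quiver varieties for the affine Dynkin diagram $\widehat{\Delta}$, and their Euler characteristics are weight multiplicities in the basic level-one integrable highest-weight module of $\widehat{\frakg}_\Delta$. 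The Frenkel--Kac vertex operator construction of this module on the bosonic Fock space tensored with the group algebra of the root lattice of $\Delta$ then expresses its character as a Dedekind $\eta$-type factor times a theta sum over that root lattice, already producing the shape $\sum_{\overline{m}\in\SZ^n}(q^{1/2})^{\overline{m}^\top C_\Delta \overline{m}}$ of (\ref{main:formula}).

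The concluding step, and the one I expect to be hardest, is to pin down the correct specialization of the bi-graded character --- tracking only total colength on $\SC^2/G_\Delta$ --- and to show that it produces precisely the root of unity $\zeta = \exp(2\pi i/(1+h^\vee))$ in front of the theta sum. For type $A$ this can be verified directly from the abacus/coloured-partition model of $G_\Delta$-fixed points on $\mathrm{Hilb}^N(\SC^2)$; for type $D$ the folding $\widehat{D}_n \leftarrow \widehat{A}_{2n-1}$ should reduce the question to a twisted form of the $A$-case, which is the content of the announced proof. For type $E$ the obstruction is the lack of an equally explicit combinatorial model of the fixed loci, and I would expect the final identification of $\zeta$ to require either a Kac--Wakimoto modular/denominator-identity argument, or a direct geometric comparison between the Bridgeland--King--Reid $G_\Delta$-Hilbert scheme and $\mathrm{Hilb}(\SC^2/G_\Delta)$ that uniformly extracts the specific shift $1+h^\vee$ from the representation-theoretic side.
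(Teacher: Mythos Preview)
The statement you are attempting to prove is a \emph{Conjecture} in the paper, not a theorem; the authors do not prove it in general. What they do prove (Theorem~\ref{thm:typeAD}) is the special case of types $A$ and $D$, and their methods there are quite different from your outline: for type $A$ they use direct torus localization on $\mathrm{Hilb}^m(\SC^2/G_\Delta)$ itself to obtain a combinatorial series, summed in closed form by Dijkgraaf--Sulkowski (or by Toda via wall-crossing, or by an elementary argument in the companion paper); for type $D$ the result is the main theorem of the companion paper, again via a direct combinatorial analysis. They explicitly remark that these arguments are type-specific and that pushing them through for type $E$ seems difficult.

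Your proposal contains a genuine gap that is not merely a missing detail but is essentially the entire content of the conjecture. In your third paragraph you assert that length-$m$ ideals in $\SC[[x,y]]^{G_\Delta}$ correspond to $G_\Delta$-invariant colength-$|G_\Delta|m$ ideals in $\SC[[x,y]]$, so that $Z_{\mathrm{loc}}$ is ``a specialization'' of the equivariant generating series. This correspondence is false as stated: pullback gives a map from $\mathrm{Hilb}(\SC^2/G_\Delta)$ into $\bigl(\mathrm{Hilb}(\SC^2)\bigr)^{G_\Delta}$, but it is neither surjective (most $G_\Delta$-invariant ideals of $\SC[x,y]$ are not extended from the invariant ring) nor does it respect colength by the simple factor $|G_\Delta|$ once the module $\SC[x,y]$ fails to be free over $\SC[x,y]^{G_\Delta}$. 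The paper's Remark in Section~5 makes the same observation from the other side: the equivariant Hilbert schemes $\mathrm{Hilb}^\rho(\SC^2)$ do assemble the full extended basic representation $V$ with character~\eqref{eq:extcharformula}, but the passage from this multigraded character to $Z_{\SC^2/G_\Delta}(q)$ is precisely the unexplained specialization $q_i\mapsto\zeta$. In other words, the step you flag as ``hardest'' is not a technical tail-end identification; it is the conjecture itself, and nothing earlier in your outline reduces it.

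A secondary issue: your second paragraph conflates the smooth locus $S^0=(\SC^2\setminus\{0\})/G_\Delta$, which has $\chi(S^0)=0$ and hence contributes trivially to the factorization, with the minimal resolution $\widetilde S$, which has $\chi(\widetilde S)=n+1$ but is a different variety with a different Hilbert scheme. The prefactor $\bigl(\prod(1-q^m)^{-1}\bigr)^{n+1}$ in~\eqref{main:formula} does not arise from G\"ottsche on either space; it comes out of the Frenkel--Kac side (the $(n+1)$ free bosons), and matching it geometrically is again part of what one has to prove.
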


Evidence for the conjecture is presented in the following result. 

\begin{theorem}
\label{thm:typeAD}
Let $S=\SC^2/G_\Delta$ be a simple singularity with $\Delta$ of type $A_n$ for $n \geq 1$ or $D_n$ for $n \geq 4$. Then 
Conjecture~\ref{conj:formula} holds.
\end{theorem}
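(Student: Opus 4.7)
The strategy is to treat the type $A$ and type $D$ cases by related but distinct arguments, each reducing the computation of $\chi(\mathrm{Hilb}^m(\SC^2/G_\Delta))$ to a combinatorial enumeration of torus-fixed points on the Hilbert scheme. The type $A$ case should be accessible via standard techniques on toric surfaces, while the bulk of the technical work is concentrated in type $D$.

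For $\Delta = A_n$, the group $G_\Delta = \SZ/(n+1)$ acts diagonally on $\SC^2$, so $\SC^2/G_\Delta$ is a toric surface and the induced $(\SC^*)^2$-action on each $\mathrm{Hilb}^m(\SC^2/G_\Delta)$ has isolated fixed points indexed by monomial ideals in the invariant ring $\SC[x,y]^{G_\Delta}$. These admit a combinatorial parametrization in terms of partitions carrying an $(n+1)$-coloring induced by the natural $\SZ/(n+1)$-grading. Via the classical $(n+1)$-core/$(n+1)$-quotient decomposition, the resulting generating series factors as the product of an unrestricted contribution (producing the prefactor $\left(\prod_{m \geq 1}(1-q^m)^{-1}\right)^{n+1}$) with a residual sum over the $(n+1)$-core lattice, which is naturally identified with the $A_n$ root lattice. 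Matching this residual sum with the specialized theta series in~\eqref{main:formula} is then a direct calculation, with the root of unity $\zeta = \exp(2\pi i/(n+2))$ emerging from the $\SZ/(n+1)$-charge carried by the colored partitions.

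For $\Delta = D_n$, the binary dihedral group $G_\Delta$ of order $4(n-2)$ fits in a short exact sequence $1 \to \SZ/(2n-4) \to G_\Delta \to \SZ/2 \to 1$, which realizes $\SC^2/G_\Delta$ as the $\SZ/2$-quotient of the type $A_{2n-5}$ singularity $\SC^2/\SZ_{2n-4}$. The plan is to stratify $\mathrm{Hilb}^m(\SC^2/G_\Delta)$ according to this two-step quotient, apply the type $A$ enumeration to the contribution descending from $\SC^2/\SZ_{2n-4}$, and separately evaluate the punctual contribution at the dihedral singular point using a residual $\SC^*$-action. Tracking how the residual $\SZ/2$-action permutes the set of colored partitions from the type $A$ case then expresses the generating series as a sum over $\SZ/2$-symmetric abacus configurations together with punctual weights.

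The hardest step is the final combinatorial identity in the type $D$ case: one must show that this symmetric enumeration collapses to $\sum_{\overline{m}\in\SZ^n}\zeta^{\sum m_i}(q^{1/2})^{\overline{m}^\top C_\Delta \overline{m}}$ with $\zeta = \exp(2\pi i/(2n-1))$. The natural output is indexed by a $\SZ/2$-invariant sublattice of the $A_{2n-5}$ root lattice, and the required identity is essentially the Dynkin folding from type $A$ to type $D$; beyond matching the quadratic form, one must verify that all phase factors assemble correctly into the prescribed $\zeta^{\sum m_i}$ rather than into independent sign choices. The conceptually cleanest route is expected to go through a comparison with the Frenkel-Kac and principal vertex-operator realizations of the basic representation of $\widehat{\frakg}_{D_n}$, where the analogous $\SZ/2$-folding is well understood; alternatively, the identity can be pushed through by direct manipulation of Jacobi theta functions.
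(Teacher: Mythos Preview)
Your type $A$ outline is correct and aligns with the paper: torus localization on the toric quotient reduces to colored-partition combinatorics, and the closed form is already in the literature (Dijkgraaf--Sulkowski, Toda; the companion paper~\cite{gyenge2015main} supplies a third elementary argument).

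The type $D$ strategy, however, has a genuine gap. First, the proposed stratification is circular: the $D_n$ singularity carries a contracting $\SC^*$-action, so the punctual generating series at the origin already \emph{equals} $Z_{\SC^2/G_\Delta}(q)$ and cannot be split off as a residual term to be evaluated separately. Nor is there a clean way to ``apply the type $A$ enumeration'' through the cover $\SC^2/\SZ_{2n-4}\to\SC^2/G_\Delta$: the functor $\mathrm{Hilb}^m$ does not commute with taking a $\SZ/2$-quotient, and pullback of subschemes along a branched double cover is badly behaved precisely at the branch point, which is where all the content sits. Second, the final identification fails on the nose. There is no Dynkin folding from type $A$ to type $D$ (diagram foldings produce non-simply-laced types), and the $\SZ/2$-invariant sublattice of the $A_{2n-5}$ root lattice under the diagram involution has rank $n-2$, not $n$, so it cannot carry the $D_n$ theta function in~\eqref{main:formula}; the two missing directions would have to come from the punctual term, which is exactly the quantity you are trying to compute. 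For comparison, the argument in~\cite{gyenge2015main} does not reduce to type $A$: it works directly with $\SC^*$-homogeneous ideals in the $D_n$ coordinate ring, matches the resulting fixed-point combinatorics with type $D$ Young walls, and reads off~\eqref{main:formula} from the known identification of their generating function with the specialized character of the basic representation.
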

\begin{proof} For type $A$, straightforward torus localization leads to a combinatorial series. This series was
determined in closed form by Dijkgraaf and Sulkowski~\cite{dijkgraaf2008instantons} using a direct method, 
obtaining formula~\cite[(10)]{dijkgraaf2008instantons}, which is equivalent to~\eqref{main:formula} for type~$A$. The same result, still for type $A$, was recently re-proved, using wall-crossing in Donaldson--Thomas theory, by Toda~\cite{toda2013s}. We give 
a third, elementary combinatorial argument in~\cite[Section 2]{gyenge2015main}. 

For type $D$, formula~\eqref{main:formula} is the main result of~\cite{gyenge2015main}. 
\end{proof}

\begin{remark} A priori it is not clear at all (at least to us) that the right hand 
side of~\eqref{main:formula} defines an integer
series. We checked numerically that the series has integer coefficients for $\Delta$ of type 
$E_6$, $E_7$ and $E_8$ as well to a high power in~$q$. See below for further discussion. We also note that while Theorem~\ref{thm:typeAD} covers in some sense ``most'' of the cases, the existing proofs are specific to types $A$ and $D$ and it seems difficult to push them through in type $E$.
\end{remark}

\section{Surfaces with simple singularities}

Let $X=S$ be a quasi-projective surface which is non-singular outside a finite number of simple surface singularities $\{P_1, \ldots, P_k\}$, with $(P_i\in S)$ a singularity locally analytically isomorphic to $(0\in \SC^2/G_{\Delta_i})$ for $G_{\Delta_i}<\mathrm{SL}(2,\SC)$ a finite subgroup as above. Let $S^0\subset S$ be the nonsingular part of $S$.

\begin{theorem} \label{thm:singsurface} The generating function $Z_S(q)$
 of the Euler characteristics of Hilbert schemes of points of $S$ has
 a product decomposition
\begin{equation}\label{formula:singsurface}
Z_S(q)= \left(\prod_{m=1}^{\infty}(1-q^m)^{-1}\right)^{\chi(S^0)}\cdot \prod_{j=1}^k Z^{(P_i, S)}(q).
\end{equation}
The local terms can be expressed as
\begin{equation} Z^{(P_i, S)}(q) = Z_{\SC^2/G_{\Delta_i}}(q)\label{eq:localterms}
\end{equation}
and are given by formula~\eqref{main:formula} for $P_i\in S$ of type $A$ and $D$, and, assuming Conjecture~\ref{conj:formula}, also of type $E$.
\end{theorem}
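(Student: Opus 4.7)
The strategy is the standard Hilbert--Chow stratification argument, adapted to allow isolated surface singularities. I write $S = S^0 \sqcup \{P_1, \dots, P_k\}$ as a disjoint union of the smooth locus and the finite singular set. Via the Hilbert--Chow morphism $\pi_m\colon \mathrm{Hilb}^m(S) \to \mathrm{Sym}^m(S)$, I stratify the target by how the underlying $0$-cycle distributes itself among the strata of $S$. A fibre of $\pi_m$ over a $0$-cycle with distinct support $\sum_i n_i [P_i] + \sum_\ell r_\ell [Q_\ell]$ (the $Q_\ell\in S^0$ mutually distinct) factors as a product of punctual Hilbert schemes $\prod_i \mathrm{Hilb}^{n_i}(S, P_i)\times \prod_\ell \mathrm{Hilb}^{r_\ell}(S, Q_\ell)$, and the Euler characteristic of each factor is a local-analytic invariant of the respective point.

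Summing over these strata and invoking additivity of $\chi$ on constructible decompositions together with multiplicativity on products produces a factorisation
\[
Z_S(q) = Z_{S^0}(q) \cdot \prod_{i=1}^k Z^{(P_i, S)}(q),
\]
where $Z^{(P_i,S)}(q) = \sum_{n\ge 0} \chi(\mathrm{Hilb}^n(S, P_i))\,q^n$ is the punctual generating series at $P_i$. The factor $Z_{S^0}(q)$ equals $\prod_m (1-q^m)^{-\chi(S^0)}$ by the Euler-characteristic version of G\"ottsche's formula~\eqref{eq:goettsche}, which extends from projective to arbitrary smooth quasi-projective surfaces via the same stratification argument, building on the identity $\chi(\mathrm{Hilb}^n(\SC^2, 0)) = p(n)$. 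This gives~\eqref{formula:singsurface}.

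To match the local terms with the global series of the local models I apply the same factorisation to $X = \SC^2/G_{\Delta_i}$. Its smooth locus $(\SC^2\setminus\{0\})/G_{\Delta_i}$ is a free quotient, hence
\[
\chi\bigl((\SC^2\setminus\{0\})/G_{\Delta_i}\bigr) = \chi(\SC^2\setminus\{0\})/|G_{\Delta_i}| = 0,
\]
so the G\"ottsche factor is trivial and $Z_{\SC^2/G_{\Delta_i}}(q)$ collapses to the single punctual contribution at the origin. Because the germ $(S, P_i)$ is analytically isomorphic to $(\SC^2/G_{\Delta_i}, 0)$ and punctual Hilbert schemes depend only on the formal neighbourhood, this punctual series coincides with $Z^{(P_i, S)}(q)$, yielding~\eqref{eq:localterms}. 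The explicit evaluation~\eqref{main:formula} of the local terms in types $A$ and $D$ then follows from Theorem~\ref{thm:typeAD}, and in type $E$ from Conjecture~\ref{conj:formula}.

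The result is a packaging of known ingredients, and I do not expect a serious obstacle. The one point that requires care is organising the summation so that the factor $(\prod_m(1-q^m)^{-1})^{\chi(S^0)}$ is interpreted via the natural power structure on $1 + q\SZ[[q]]$ when $\chi(S^0)$ is negative; this is purely formal. Otherwise the argument rests only on (a) topological invariance of punctual Hilbert schemes under analytic isomorphism of germs, (b) G\"ottsche's formula for the smooth locus, and (c) the vanishing of the smooth-part Euler characteristic of the local Kleinian model.
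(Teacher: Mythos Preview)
Your proof is correct and follows essentially the same stratification argument as the paper: decompose $\mathrm{Hilb}^m(S)$ according to how much length lands at each singular point versus in $S^0$, apply G\"ottsche to the smooth factor, and use analytic invariance of punctual Hilbert schemes for the local factors. The one substantive difference is in how you identify the punctual series at $P_i$ with the global series $Z_{\SC^2/G_{\Delta_i}}(q)$. The paper invokes the weighted-homogeneous $\SC^*$-action on $\SC^2/G_{\Delta_i}$, which retracts every finite subscheme to one supported at the origin, giving $\chi(\mathrm{Hilb}^m(\SC^2/G_{\Delta_i})) = \chi(\mathrm{Hilb}^m_0(\SC^2/G_{\Delta_i}))$ directly via torus localization. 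You instead re-run the factorisation on the local model and observe that its smooth locus $(\SC^2\setminus\{0\})/G_{\Delta_i}$ has Euler characteristic zero, so the G\"ottsche factor is trivial. Both are valid; your argument is more elementary and avoids any torus action, while the paper's $\SC^*$-retraction would continue to work for weighted-homogeneous surface singularities whose punctured link has nonzero Euler characteristic.
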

\begin{proof} The product decomposition \eqref{formula:singsurface}, as well as the equality~\eqref{eq:localterms}, follow from a standard argument; we sketch the details. For a point $P\in S$ on a quasiprojective surface~$S$, let $\mathrm{Hilb}_P^{m}(S)$ denote the punctual Hilbert scheme of~$S$ at $P$, the Hilbert scheme of length~$m$ subschemes of~$S$ set-theoretically supported at the single point~$P$. Then for our surface~$S$, we
have 
\[ \chi(\mathrm{Hilb}_{P_i}^{m}(S)) = \chi(\mathrm{Hilb}_0^{m}(\SC^2/G_{\Delta_i}))  = \chi(\mathrm{Hilb}^{m}(\SC^2/G_{\Delta_i})).
\]
Here the first equality follows from the analytic isomorphism between $(P_i\in S)$ and $(0\in \SC^2/G_{\Delta_i})$. The second equality follows from torus localization, using the fact that each singularity $\SC^2/G_{\Delta_i}$ is weighted homogeneous, admitting a retracting $\SC^*$-action which retracts finite subschemes to the origin. On the other hand,  we have the decomposition 
\[ \mathrm{Hilb}^{m}(S)= \bigsqcup_{\sum_{i=0}^k m_i=m} \mathrm{Hilb}^{m_0}(S^0) \times \prod_{i=1}^k \mathrm{Hilb}_{P_i}^{m_i}(S).
\]
Reinterpreting this equality for generating series proves \eqref{formula:singsurface}-\eqref{eq:localterms}, recalling 
also~\eqref{eq:goettsche} for $S^0$. The last part of the statement is Theorem~\ref{thm:typeAD}. 
\end{proof}

Formulas~\eqref{main:formula}-\eqref{formula:singsurface}-\eqref{eq:localterms}
are our analogue for the case of surfaces with simple singularities of the Oblomkov--Shende--Maulik formula~\eqref{formula:singcurve}. Note that each $\SC^2/G_{\Delta_i}$ is in particular a hypersurface singularity, as are planar singularities in the curve case. The main difference with formula~\eqref{formula:singcurve} is the fact that (conjecturally, for type $E$) our local terms $Z^{(P_i, S)}(q)$ are expressed in terms of Lie-theoretic and not topological data. We leave the question whether our local terms have any interpretation of in terms of the topology of the (embedded) link of $P_i$, and whether there are nice formulas for other two-dimensional hypersurface singularities, for further work. 

\section{Modularity results}

Our formulae lead to the following new modularity results, extending the results of~\cite{toda2013s} for type~$A$.

\begin{corollary}\! (S-duality for simple singularities) \ For type $A$ and type $D$, and, assuming Conjecture~\ref{conj:formula}, for all types, the partition function $Z_{\SC^2/G_\Delta}(q)$  is, up to a suitable fractional power of~$q$, the $q$-expansion of a meromorphic modular form of weight $-\frac{1}{2}$ for some congruence subgroup of $\mathrm{SL}(2,\SZ)$.
\label{cor:Sdual}
\end{corollary}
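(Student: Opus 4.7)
Given formula \eqref{main:formula} for $Z_{\SC^2/G_\Delta}(q)$ (unconditional in types $A$ and $D$ by Theorem \ref{thm:typeAD}, conjectural in type $E$), the plan is to express the right-hand side as the product of an $\eta$-quotient and a lattice theta series with a finite-order character, and then combine classical modularity statements for the two factors. Setting $q = e^{2\pi i \tau}$, I first recognize
\[
\prod_{m \geq 1}(1-q^m)^{-(n+1)} \;=\; q^{(n+1)/24}\,\eta(\tau)^{-(n+1)},
\]
which, since $\eta$ is a weight $1/2$ modular form for $\mathrm{SL}(2,\SZ)$ with the standard order-$24$ multiplier system and has no zero on the upper half plane, identifies the prefactor, up to the fractional $q$-power $q^{(n+1)/24}$, as a meromorphic modular form of weight $-(n+1)/2$, holomorphic on the upper half plane with a controlled pole at $i\infty$.

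For the theta sum, let $N = 1+h^\vee$, so that $\zeta$ has order dividing $N$. The character $\overline{m} \mapsto \zeta^{m_1+\cdots+m_n}$ is trivial on the sublattice $N\SZ^n \subset \SZ^n$, so splitting $\overline{m} = \mu + N\overline{m}'$ with $\mu \in (\SZ/N\SZ)^n$ and $\overline{m}' \in \SZ^n$ rewrites the sum as the finite linear combination
\[
\sum_{\mu \in (\SZ/N\SZ)^n} \zeta^{\mu_1+\cdots+\mu_n}\, \theta_\mu(\tau), \qquad \theta_\mu(\tau) \;=\; \sum_{\overline{m}' \in \SZ^n} q^{(\mu + N\overline{m}')^\top C_\Delta (\mu + N\overline{m}')/2}.
\]
Each $\theta_\mu$ is a shifted theta series attached to the positive-definite integral lattice $(N\SZ^n, C_\Delta)$ with shift $\mu$; by the classical theory of theta series with characteristics for positive-definite rational quadratic forms, each is a holomorphic modular form of weight $n/2$ for some principal congruence subgroup $\Gamma(M) \subset \mathrm{SL}(2,\SZ)$, with $M$ depending only on $N$ and on the level of the Cartan matrix $C_\Delta$. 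The full sum then also has weight $n/2$ and level dividing $M$.

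Multiplying the two pieces,
\[
q^{-(n+1)/24}\, Z_{\SC^2/G_\Delta}(q) \;=\; \eta(\tau)^{-(n+1)} \cdot \sum_{\mu \in (\SZ/N\SZ)^n} \zeta^{\mu_1+\cdots+\mu_n}\, \theta_\mu(\tau)
\]
is a meromorphic modular form of weight $-(n+1)/2 + n/2 = -\tfrac{1}{2}$ for the intersection of $\Gamma(M)$ with the subgroup of $\mathrm{SL}(2,\SZ)$ on which the multiplier of $\eta^{-(n+1)}$ trivializes; passing if necessary to the deeper principal congruence subgroup $\Gamma(\mathrm{lcm}(24, M))$ renders this unconditional, with fractional $q$-power equal to $q^{-(n+1)/24}$.

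The argument is essentially formal and classical, so no single step presents a genuine obstacle. The only delicate part is bookkeeping the precise common congruence subgroup on which both the theta sum and the $\eta$-quotient are modular with compatible multiplier systems; a sharper statement of the level would use the exact level of the root lattice of $\Delta$ (controlled by $\det C_\Delta$) together with the order $N = 1 + h^\vee$ of the character, rather than passing to a maximal common principal subgroup.
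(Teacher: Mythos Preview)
Your argument is correct. The paper's own proof is a one-line citation of \cite[Prop.~3.2]{toda2013s}, which records exactly the modularity of such a product of an $\eta$-power with a lattice theta sum twisted by a finite-order character; your proposal simply unpacks that black box, so the approaches coincide in content if not in length.
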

\begin{proof} This follows straight from \cite[Prop.3.2]{toda2013s}.
\end{proof}

\begin{corollary}\! (S-duality for surfaces with simple singularities) \ Let $S$ be a quasiprojective
surface with simple singularities of type $A$ and $D$, or, 
assuming Conjecture~\ref{conj:formula}, of arbitrary type. Then the generating function $Z_S(q)$
is, up to a suitable fractional power of~$q$, the $q$-expansion of a meromorphic modular form 
of weight $-\frac{\chi(S)}{2}$ for some congruence subgroup of $\mathrm{SL}(2,\SZ)$.
\end{corollary}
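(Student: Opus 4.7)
The plan is to deduce the statement directly from Theorem~\ref{thm:singsurface} and Corollary~\ref{cor:Sdual}, using only the classical modularity of the Dedekind eta function and the standard fact that the intersection of finitely many congruence subgroups of $\mathrm{SL}(2,\SZ)$ is again a congruence subgroup. The strategy is to analyze the two types of factors in the product decomposition \eqref{formula:singsurface} separately, check that each is a meromorphic modular form of known weight for some congruence subgroup (up to a fractional $q$-shift), and then assemble the pieces.

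First I would rewrite $\prod_{m\geq 1}(1-q^m)^{-1} = q^{1/24}\,\eta(\tau)^{-1}$, where $q = e^{2\pi i \tau}$ and $\eta$ denotes the Dedekind eta function, a weight-$\tfrac{1}{2}$ modular form on $\mathrm{SL}(2,\SZ)$ with a multiplier system. Raising to the $\chi(S^0)$-th power shows that the first factor on the right-hand side of~\eqref{formula:singsurface} is, up to the fractional shift $q^{\chi(S^0)/24}$, a meromorphic modular form of weight $-\chi(S^0)/2$ for some congruence subgroup $\Gamma_0\le \mathrm{SL}(2,\SZ)$. Next, applying Corollary~\ref{cor:Sdual} (unconditionally in types $A$ and $D$, and conditionally on Conjecture~\ref{conj:formula} in type $E$), each local factor $Z^{(P_i,S)}(q) = Z_{\SC^2/G_{\Delta_i}}(q)$ is, up to a fractional power of $q$, a meromorphic modular form of weight $-\tfrac{1}{2}$ for some congruence subgroup $\Gamma_i$. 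A finite product of meromorphic modular forms for congruence subgroups $\Gamma_0,\Gamma_1,\ldots,\Gamma_k$ is a meromorphic modular form for the intersection $\Gamma=\bigcap_{i=0}^k\Gamma_i$, which is still a congruence subgroup. Collecting all the individual fractional $q$-shifts into one overall fractional power of $q$, we conclude that $Z_S(q)$ is a meromorphic modular form for $\Gamma$ of weight
\[
-\frac{\chi(S^0)}{2} + \sum_{i=1}^k \left(-\frac{1}{2}\right) = -\frac{\chi(S^0)+k}{2}.
\]

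To match this with the claimed weight $-\chi(S)/2$, I would use additivity of topological Euler characteristic under the locally closed stratification $S = S^0 \sqcup \{P_1,\ldots,P_k\}$, which gives $\chi(S) = \chi(S^0) + k$. Given that Theorem~\ref{thm:singsurface} and Corollary~\ref{cor:Sdual} do all the substantial work, there is no genuine obstacle in this argument; the only step requiring mild care is bookkeeping the multiplier systems and fractional $q$-shifts coming from each factor and assembling them into a single overall fractional power of $q$ in the final statement.
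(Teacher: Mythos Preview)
Your proof is correct and follows exactly the approach indicated in the paper, which simply says to combine Theorem~\ref{thm:singsurface} and Corollary~\ref{cor:Sdual}. You have merely spelled out the details of that combination: the modularity of the $\eta$-power coming from the smooth part, the weight $-\tfrac12$ contribution from each local factor via Corollary~\ref{cor:Sdual}, the intersection of congruence subgroups, and the bookkeeping $\chi(S)=\chi(S^0)+k$ to match the claimed total weight.
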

\begin{proof}  Combine Theorem~\ref{thm:singsurface} and Corollary~\ref{cor:Sdual}. 
\end{proof}

\section{A representation of an affine Lie algebra}
 
Our formula~\eqref{main:formula} has a strong representation-theoretic flavour. We discuss the relevant ideas 
in this section, and conclude with some speculations. 

Let $\frakg_\Delta$ be the complex finite dimensional simple Lie algebra of
rank $n$ corresponding to the irreducible simply-laced Dynkin diagram
$\Delta$ with $n$ nodes. Attached to $\Delta$ is also an (untwisted) affine Lie 
algebra $\widetilde\frakg_\Delta$; a slight variant will be more interesting for us, 
see e.g.~\cite[Sect 6]{etingof2012symplectic}. 
Denote by $\widetilde{\frakg_\Delta\oplus\SC}$ the  Lie algebra that is the direct sum of the affine Lie algebra $\widetilde\frakg_\Delta$ and an infinite Heisenberg algebra $\frakheis$, with their centers identified; we may call $\widetilde{\frakg_\Delta\oplus\SC}$ the extended affine Lie algebra associated with $\Delta$. 

Let $V_0$ be the basic representation of $\widetilde\frakg_\Delta$, the level-1 representation with highest 
weight $\omega_0$, the fundamental weight corresponding to the additional node
of the affine Dynkin diagram. Let $\CF$ be the standard
Fock space representation of $\frakheis$, having central charge 1. Then $V=V_0\otimes \CF$ is a representation of 
$\widetilde{\frakg_\Delta\oplus\SC}$ that we may call the extended basic representation. 

\begin{example} For $\Delta$ of type $A_n$, we have $\frakg_\Delta={\mathfrak{sl}}_{n+1}$, $\tilde\frakg_\Delta=\widetilde{\mathfrak{sl}}_{n+1}$, $\widetilde{\frakg_\Delta\oplus\SC}=\widetilde{\mathfrak{gl}}_{n+1}$.
In this case there is in fact a natural vector space isomorphism $V\cong \CF$ with Fock space itself, see e.g.~\cite[Section 3E]{tingley2011notes}.
\end{example} 

By the Frenkel--Kac theorem~\cite{frenkel1980basic} there is an isomorphism
\[
V \cong \CF^{n+1}\otimes \SC[Q_\Delta],
\]
where $(Q_\Delta, \langle\  \rangle)$ is the root lattice corresponding to the root system $\Delta$. 
Here, for $\beta\in Q_\Delta$, 
$\CF^{n+1}\otimes e^\beta$ is the sum of weight subspaces of weight $\omega_0 - \left(m+\frac{\langle \beta,\beta\rangle}{2}\right)\delta + \beta$, $m \geq 0$, where $\delta$ is the imaginary root. 
Thus, we can write the character of the representation $V$ as
\begin{equation} 
\label{eq:extcharformula}
\mathrm{char}_V(q_0, \ldots, q_n) = e^{\omega_0} \left(\prod_{m>0}(1-q^m)^{-1}\right)^{n+1} \cdot \sum_{ \beta \in Q_\Delta } q_1^{\beta_1}\cdot\dots\cdot q_n^{\beta_n}(q^{1/2})^{\langle \beta,\beta\rangle},
\end{equation}
where $q=e^{-\delta}$, and
$\beta=(\beta_1, \ldots, \beta_n)\in Q_\Delta$ is the expression of an
element of the root lattice $Q_\Delta$ in terms of the simple roots. 

\begin{remark} The representation $V$ itself is of course well known to appear in the equivariant geometry
of the pair $(\SC^2, G_\Delta)$. Given any finite-dimensional representation $\rho\in{\rm Rep}(G_\Delta)$, 
denote by ${\rm Hilb}^\rho(\SC^2)$ the
$\rho$-Hilbert scheme of $\SC^2$, the scheme representing $G_\Delta$-equivariant ideals $I\lhd\SC[\SC^2]$ with
$\SC[\SC^2]/I\cong\rho$. These are all Nakajima quiver varieties for the affine Dynkin diagram associated with
$\Delta$ (the McKay quiver of $G_\Delta$). By results of Nakajima~\cite{nakajima1994instantons, nakajima2002geometric},
there is in fact an isomorphism of
$\widetilde{\frakg_\Delta\oplus\SC}$-representations
\[ V\cong \bigoplus_{\rho\in{\rm Rep}(G_\Delta)} H^*({\rm Hilb}^\rho(\SC^2)),
\]
where the $\widetilde{\frakg_\Delta\oplus\SC}$-action on the right hand side
is given by natural correspondences. Since these varieties have no odd
cohomology~\cite[Section 7]{nakajima2001quiver}, the Euler characteristic generating series of the right
hand side is given precisely by the
character~\eqref{eq:extcharformula} of $V$.
\end{remark}

Comparing formulas \eqref{main:formula} and~\eqref{eq:extcharformula}, we deduce the following. 

\begin{theorem} For $\Delta$ of type $A$ and $D$, or, assuming Conjecture~\ref{conj:formula}, of 
arbitrary type, the generating 
series $Z_{\SC^2/G_\Delta}(q)$ is a specialization of the character formula of the extended basic representation 
of the associated extended affine Lie algebra $\widetilde{\frakg_\Delta\oplus\SC}$, obtained by setting $q_i=\exp({\frac{2 \pi i}{1+h^\vee}})$ 
for $i=1, \ldots, n$. 
\end{theorem}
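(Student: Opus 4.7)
The plan is a direct term-by-term comparison of two formulas already on the page: the generating series~\eqref{main:formula} for $Z_{\SC^2/G_\Delta}(q)$ (established by Theorem~\ref{thm:typeAD} in types $A$, $D$, and conjectural in type $E$) and the Frenkel--Kac character formula~\eqref{eq:extcharformula} for the extended basic representation $V$. The whole statement reduces to identifying summation indices, matching prefactors, and specializing the formal weight variables correctly. No geometric input is needed beyond what has already been invoked.

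First I would fix the identification of lattice data. An element $\beta \in Q_\Delta$ expands uniquely as $\beta = \sum_{i=1}^n m_i \alpha_i$ in the simple root basis, giving a bijection $Q_\Delta \leftrightarrow \SZ^n$, $\beta \leftrightarrow \overline{m}=(m_1,\dots,m_n)$, where the $\beta_i$ of~\eqref{eq:extcharformula} are precisely the $m_i$ of~\eqref{main:formula}. For a simply-laced root system normalized by $\langle \alpha_i, \alpha_i \rangle = 2$, the Gram matrix of the simple roots is the Cartan matrix, so
\[
\langle \beta, \beta \rangle \;=\; \overline{m}^\top \cdot C_\Delta \cdot \overline{m},
\]
matching the quadratic form appearing in the exponent of $q^{1/2}$ in both expressions.

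Next I would perform the specialization. Setting $q_i = \zeta = \exp(2\pi i/(1+h^\vee))$ for $i=1,\dots,n$ in~\eqref{eq:extcharformula} collapses the monomial $q_1^{\beta_1}\cdots q_n^{\beta_n}$ to $\zeta^{m_1+\dots+m_n}$, reproducing the root-of-unity factor in~\eqref{main:formula}. The infinite product prefactor $\bigl(\prod_{m>0}(1-q^m)^{-1}\bigr)^{n+1}$ is already common to the two formulas, with $q = e^{-\delta}$ playing the role of the Hilbert scheme counting variable. The remaining symbol $e^{\omega_0}$ is a purely formal record of the highest weight of $V_0$ and is stripped off by the specialization; this is the precise sense in which $Z_{\SC^2/G_\Delta}(q)$, a single power series in $q$, equals a \emph{specialization} of the character.

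I do not expect a real obstacle: once Theorem~\ref{thm:typeAD} or Conjecture~\ref{conj:formula} is granted, all that remains is the bookkeeping above. The only subtlety is to be explicit about normalization conventions, namely that the $q_i$ are the exponentials of the simple roots in the Frenkel--Kac presentation and that $\omega_0$ is set to zero (equivalently $e^{\omega_0}=1$) in the specialization. A short remark verifying that $\zeta$ is indeed a natural root of unity from the Lie-theoretic side, perhaps indicating why the denominator is $1+h^\vee$ rather than $h^\vee$, would make the representation-theoretic content of the specialization more transparent, but is not logically needed for the proof itself.
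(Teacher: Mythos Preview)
Your proposal is correct and follows exactly the same approach as the paper, which simply states that the theorem is deduced by ``comparing formulas~\eqref{main:formula} and~\eqref{eq:extcharformula}'' without further elaboration. Your write-up is a careful unpacking of that one-line comparison; there is nothing more to add.
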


We remark here that Dijkgraaf and Sulkowski~\cite{dijkgraaf2008instantons} already noticed
the fact that~\eqref{main:formula} for type $A$ is a specialized character formula for~$\widetilde{\mathfrak{sl}}_{n+1}$.

We do not know whether~\eqref{main:formula} itself is a character
formula for a Lie algebra, but this appears plausible. Such a result would
of course imply in particular that its coefficients are (positive)
integers. It would also suggest that suitable cohomology groups $\oplus_m H^*(\mathrm{Hilb}^m(\SC^2/G_\Delta))$, 
and indeed  $\oplus_m H^*(\mathrm{Hilb}^m(S))$ for $S$ a surface with simple singularities, would carry actions of 
interesting Lie algebras, generalizing the Heisenberg algebra actions of Grojnowski and 
Nakajima~\cite{grojnowski1996instantons,nakajima1997heisenberg} when $S$ is a smooth surface. 

In a different direction, one may also wonder about a higher-rank generalization. 
Our current discussion involves Hilbert schemes, parametrizing rank $r=1$ sheaves on the singular surface. 
In the relationship between the instantons on algebraic surfaces and affine Lie algebras, 
level equals rank~\cite{grojnowski1996instantons}. Indeed the extended basic represenatation $V$ has level $l=1$. 
Thus the substitution above is by the root of unity $\zeta=\exp({\frac{2 \pi i}{l+h^\vee}})$. In higher rank, one may wonder whether
there are similar formulae involving Euler characteristics of 
degenerate versions of the moduli space of rank $r=l$ framed $G_\Delta$-equivariant vector bundles on $\SP^2$ on the one hand, 
and specialized character formulae of level $l$ representations of affine Lie algebras on the other. 

To conclude, we point out that the substitution by the root of unity $\zeta=\exp({\frac{2 \pi i}{l+h^\vee}})$ appears 
elsewhere in representation theory, notably in the Verlinde formula, and in the Kazhdan--Lusztig
equivalence between certain categories of representations of the affine Lie algebra, respectively of the finite
quantum group. We are unaware of any connection between our work and those circles of ideas. 

\subsection*{Acknowledgements} The authors would like to thank Gwyn
Bellamy, Alexander Braverman, Alastair Craw, Eugene Gorsky, Ian
Grojnowski, Kevin McGerty, Iain Gordon, Tomas Nevins, Travis Schendler and Tam\'as Szamuely for helpful comments and discussions. \'A.Gy.~was partially supported by the \emph{Lend\"ulet program} (Momentum Programme) of the Hungarian Academy of Sciences and by ERC Advanced Grant LDTBud (awarded to Andr\'as Stipsicz). A.N.~was partially supported by OTKA Grants 100796 and K112735. B.Sz.~was partially supported by EPSRC Programme Grant EP/I033343/1.

\bibliographystyle{amsplain}
\bibliography{announcement}

\end{document}